\def\1{\mathbf{1}}
\def\L{\mathcal{L}}
\newtheorem{theorem}{Theorem}[section]
\newtheorem{lemma}[theorem]{Lemma}
\newtheorem{remark}[theorem]{Remark}
\newtheorem*{assumption*}{\assumptionnumber}
\providecommand{\assumptionnumber}{}
\newenvironment{assumption}[2]
 {%
  \renewcommand{\assumptionnumber}{Assumption #1-$\mathcal{#2}$}%
  \begin{assumption*}%
  \protected@edef\@currentlabel{#1-$\mathcal{#2}$}%
 }
 {%
  \end{assumption*}
 }
\newcommand{\argmin}{\operatornamewithlimits{argmin}}
\begin{document}
\title{Stochastic Optimization using Polynomial Chaos Expansions}
\author{Tuhin Sahai\thanks{Raytheon Technologies Research Center, Berkeley, CA, USA. {\tt tuhin.sahai@gmail.com}}}
\maketitle
\begin{abstract}
Polynomial chaos based methods enable the efficient computation of output variability in the presence of input uncertainty in complex models. Consequently, they have been used extensively for propagating uncertainty through a wide variety of physical systems. These methods have also been employed to build surrogate models for accelerating inverse uncertainty quantification (infer model parameters from data) and construct transport maps. In this work, we explore the use of polynomial chaos based approaches for optimizing functions in the presence of uncertainty.  These methods enable the fast propagation of uncertainty through smooth systems. If the dimensionality of the random parameters is low, these methods provide orders of magnitude acceleration over Monte Carlo sampling. We construct a generalized polynomial chaos based methodology for optimizing smooth functions in the presence of random parameters that are drawn from \emph{known} distributions. By expanding the optimization variables using orthogonal polynomials, the stochastic optimization problem reduces to a deterministic one that provides estimates for all moments of the output distribution. Thus, this approach enables one to avoid computationally expensive random sampling based approaches such as Monte Carlo and Quasi-Monte Carlo. In this work, we develop the overall framework, derive error bounds, construct the framework for the inclusion of constraints, analyze various properties of the approach, and demonstrate the proposed technique on illustrative examples.
\end{abstract}
\section{Introduction}
Uncertainty quantification (UQ) is a popular area of research that focuses on propagating uncertainty through complex dynamic systems (typically represented by ordinary or partial
differential equations). Typical approaches for propagating uncertainty include Monte Carlo~\cite{McQMC}, Quasi-Monte Carlo methods~\cite{QMC}, and
importance sampling~\cite{Importance_Sampling,budhiraja2019analysis}. These methods are based on the sampling of the underlying input probability distributions, and consequently, are standard techniques for estimating output
uncertainty. In addition to sampling based techniques, over the last couple of decades, the UQ community has actively pursued the development of non-sampling approaches such as
response surface~\cite{Allen2009,TuhinPoly} and polynomial chaos~\cite{Wiener} based methodologies. Polynomial chaos methods involve the expansion of the stochastic variable of
interest using an orthogonal basis associated with the underlying distribution. This step is typically followed by a projection computation that exploits the aforementioned orthogonality. Let us now briefly discuss the conditions under which various sampling and non-sampling methods are found to be useful.

Sampling based methods for UQ rely on generating samples in parameter space, propagating the points through the system (evolving the points forward using numerical integration in the 
case of dynamical systems), and computing statistics of the first few moments of the output distribution. Monte Carlo methods involve generating random points that
correspond to independent trials. Note that the convergence of Monte Carlo is guaranteed by the strong law of large numbers. If one generates $N$ independent samples, the error in the mean
estimate converges as $O(N^{-1/2})$~\cite{QMC}. The advantage of Monte Carlo based sampling is that the convergence is independent of the
number of random parameters. Quasi-Monte Carlo based approaches, on the other hand, involve the generation of points using deterministic schemes. In particular, these points are generated
using low-discrepancy sequences (low-discrepancy sequences have the property that, in the limit, the fraction of points that fall into an arbitrary set $\mathbb{A}$ is equal to the
measure of $\mathbb{A}$). Quasi-Monte Carlo methods have a convergence rate of $\log^{d}(N)/N$, where $d$ is the dimensionality of the input random parameter vector. Note that
Monte Carlo and Quasi-Monte Carlo methods are routinely used in stochastic optimization for computing sample average approximations (SAA)~\cite{Cit:SAA_opt,Cit:SAA_opt2}.

As mentioned previously, polynomial chaos methods are non-sampling methods that rely on expanding the output random variables using an orthogonal polynomial basis with respect
to the prior distribution~\cite{PolyReview}. For example, if the joint probability distribution (prior) that captures the uncertainty in the random input variables is Gaussian, one uses the
Hermite basis~\cite{Orthopoly:book}. Similarly, if the uncertainty is captured by the uniform distribution, then one uses Legendre polynomials~\cite{Orthopoly:book}. One can generate
orthogonal polynomials for arbitrary distributions, for more details see~\cite{PolyReview}. The primary advantage of the polynomial chaos based approach is that it provides
exponential convergence for smooth processes with finite variance~\cite{cameron1947orthogonal}. This remarkable convergence result is obtained by invoking the Cameron-Martin theorem. Note that this
approach only works for random variables with finite variance where the underlying probability measure can be uniquely determined by its moments. A major drawback of the approach is that it suffers
from a curse of dimensionality. This curse of dimensionality will be discussed in greater detail in subsequent sections. 

Our paper is organized as follows: we start by introducing the polynomial chaos approach. We then show how one can use this method to
 efficiently compute the solutions of stochastic optimization problems and derive associated error estimates. We also explore properties associated with the
 polynomial chaos transformation and its consequences for stochastic optimization. We demonstrate the approach on illustrative examples including a real-world human-machine task scheduling problem. This new methodology for stochastic
 optimization is compared to sampling based methods. Although in~\cite{poles2009polynomial}, the authors invoke polynomial chaos for multiobjective optimization, they neither integrate these expansions into the stochastic optimization framework nor derive approximation bounds. We note that in~\cite{Cit:grid_PC}, the authors use polynomial chaos based surrogate models for stochastic optimization of the power grid.

\section{Introduction to Polynomial Chaos}
Starting with a complete probability space $\Gamma$ given by
$(\Omega, \mathcal{F},\mathbb{P})$, where $\Omega$ is the sample
space, $\mathcal{F}$ is the $\sigma$-algebra on $\Omega$ and
$\mathbb{P}$ is a probability measure, let $L_{2}(\Gamma,X)$
denote the Hilbert space of square-integrable,
$\mathcal{F}$-measurable, $X$-valued random elements. Then one can,
in general, define a polynomial chaos basis
$\{\psi_k(\lambda(\omega))\}$, where $\lambda(\omega)$ is a
random vector, $\omega \in \Omega$, and $k = (k_1,k_2,\dots)$ is a
vector of non-negative indices. We denote the probability
density function of the random vector $\lambda$ by
$\rho(\lambda)$.

Generalized polynomial chaos (gPC)~\cite{BeyondWienerAskey} provides a framework for
representing second-order stochastic processes $\kappa\in
L_{2}(\Gamma,X)$ for arbitrary distributions of $\lambda$ by using the following expansion:
\begin{equation}
\kappa(\lambda) =
\displaystyle\sum_{|k|=0}^{\infty}a_k \psi_k (\lambda),
\label{eq:expan1}
\end{equation}
where $|k| = \sum_{i} k_{i}$ is the sum of the indices of $k$ and
$\psi_k (\lambda)$ are orthonormal polynomials on $\Gamma$
with respect to $\rho(\lambda)$. Restricting our formalism to Euclidean spaces (relevant for this work) the orthonormality is given by,
\begin{equation}
\displaystyle\int_{\mathbb{R}^p}
\rho(\lambda)\psi_i(\lambda)\psi_k(\lambda)d\lambda =
\delta_{ik}, \label{eq:ortho}
\end{equation}
where $\delta_{ik}$ is the Kronecker delta product.
Depending on $\rho(\lambda)$, one can generate an appropriate
orthogonal basis for representing~$\kappa(\lambda)$. As mentioned earlier, if $\rho$ is
Gaussian, then the appropriate polynomial chaos basis is the set
of Hermite polynomials; if $\rho$ is the uniform distribution,
then the basis is the set of Legendre polynomials. For details
on the correspondence between distributions and polynomials
see~\cite{PolyReview,Ogura}. A framework for generating polynomials
for arbitrary distributions has been developed
in~\cite{BeyondWienerAskey}. The advantage of using polynomial chaos is that it provides exponential convergence for smooth processes with finite variance~\cite{cameron1947orthogonal}. However, the approach suffers from a curse of dimensionality, rendering it infeasible for problems with more than a handful of random parameters. In particular, one typically truncates the order of expansion in Eqn.~\ref{eq:expan1} (to $r$ terms). One then uses the orthogonality property in Eqn.~\ref{eq:ortho} to project the original equation onto the different coefficients $a_k$ in Eqn.~\ref{eq:expan1}~\cite{BeyondWienerAskey}. Typically, low order truncations are found to capture the uncertainty in smooth systems~\cite{cameron1947orthogonal} (as long as the underlying probability measure can be uniquely determined by its moments~\cite{Cit:ernst2012convergence}). If the order of expansion is $r$ and the dimensionality of the uncertain parameters is $p$, then the number of terms one gets is,
\begin{align}
d\frac{(r+p)!}{r!p!},
\label{eq:curse_dim}
\end{align}
where $d$ is the dimensionality of $x$ (since we assume that $x\in\mathbb{R}^d$).
To mitigate the above curse of dimensionality, sparse grid techniques~\cite{Webster2007, Nobile2008, Zabaras2008}, iterative methods~\cite{surana_uq, Cit:sahai2010cdc, Sahai2012, Klus2011}, regression based algorithms~\cite{blatman2010adaptive,blatman2011adaptive}, hierarchical methods~\cite{ma2009adaptive}, and dimensionality reduction based techniques~\cite{ma2011kernel,marzouk2009dimensionality} have been developed. We now explore the use of these methods for optimizing functions in the presence of uncertainty.

\section{Stochastic Optimization using Polynomial Chaos}
Without loss of generality, we assume that the optimization problem is posed in the form,
\begin{align}
\displaystyle\min_{x \in \mathbb{R}^d}& \,\, f(x,\lambda), \nonumber\\
\text{subject to}\quad g_{i}(x,\lambda) &\leq 0,\,\, i=1,\hdots,m\nonumber \\
h_{j}(x,\lambda) &=0,\,\, j=1,\hdots,n,
\label{eq:overall}
\end{align}
where $\lambda$ is a vector of random variables drawn from the probability distribution $\rho(\lambda)$ and $f(x,\lambda), h(x,\lambda),$ and $g(x,\lambda)$ are smooth functions of $x$ and $\lambda$. Here $m$ and $n$ are the number of inequality and equality constraints respectively.

A host of algorithms have been developed in the areas of stochastic programming~\cite{Cit:stochastic} and distributionally robust optimization~\cite{Cit:distributionally,Cit:distributionally2} to tackle this problem. However, to the best of our knowledge, none of these methods or their variants exploit the exponential convergence offered by polynomial chaos based approaches~\cite{cameron1947orthogonal}.

Typically the above set of equations (Eqn.~\ref{eq:overall}) are solved in either expectation or worst case. In the case of expectation minimization, the above optimization is converted to,
\begin{align}
\displaystyle\min_{x\in\mathbb{R}^d}\,\,\mathbb{E}\left[f(x,\lambda)\right]
\label{eq:expectation}
\end{align}
which is usually computed using \emph{expensive} Monte Carlo computations~\cite{Cit:Montecarlo_opt}. In this work, we exploit polynomial chaos expansions to approximate the optimal solution (in expectation) without resorting to expensive Monte Carlo simulations. The advantage of our approach is that one can compute several moments of the optimal solution (mean, variance, and other higher order moments) through a single optimization computation, without the need for expensive sampling. Note that the accuracy of computed moments depend on the order of expansion for the variables.

We now explore the use of polynomial chaos in the context of stochastic optimization. For simplicity, assume that the optimal solution for the problem in Eqn.~\ref{eq:overall}, in the absence of constraints, is, 
\begin{align}
x^{*}(\lambda) = \displaystyle\argmin_{x}f(x,\lambda).
\label{eq:opt_simple}
\end{align}
Although constraints in Eqn.~\ref{eq:overall} have been ignored for the moment, the constrained optimization case will be revisited in Section~\ref{sec:constraints}. We now approximate the optimization variables in terms of the orthogonal polynomials $\psi_{k}(\lambda)$ as follows,
\begin{align}
x(\lambda) \approx \displaystyle\sum_{k=0}^{r} a_{k}\psi_{k}(\lambda),
\label{eq:poly_exp}
\end{align}
which results in the following approximate optimization problem,
\begin{align}
 \displaystyle\left[a_{0}^{*},\hdots,a_{r}^{*}\right] = \displaystyle\argmin_{\left[a_{0},\hdots,a_{r}\right]}f(\displaystyle\sum_{k=0}^{r} a_{k}\psi_{k}(\lambda),\lambda).
\label{eq:expanded_opt}
\end{align}

Using expectations and interchanging the integral and minimization gives (the bounds on the error due to this interchange are derived in section~\ref{sec:interchange}),
\begin{align}
\displaystyle\left[a_{0}^{*},\hdots,a_{r}^{*}\right] \approx \displaystyle\argmin_{\left[a_{0},\hdots,a_{r}\right]}\int_{\mathbb{R}^p}f(\displaystyle\sum_{k=0}^{r} a_{k}\psi_{k}(\lambda),\lambda)\rho(\lambda)d\lambda.
\label{eq:final_min}
\end{align}
Denoting the integral term as $F(a_{0},\hdots,a_{r})$ gives,
\begin{align}
\displaystyle\left[a_{0}^{*},\hdots,a_{r}^{*}\right] \approx \displaystyle\argmin_{\left[a_{0},\hdots,a_{r}\right]}F(a_{0},\hdots,a_{r}).
\label{eq:final_min2}
\end{align}
Typically $F(a_{0},\hdots,a_{r})$ reduces to a simple form due to the orthogonal properties of the basis. Moreover, the coefficients $a_{k}$ in the expansion can be used to compute the moments of $x^{*}$~\cite{Xiu2003}. In particular, the mean $\mu_{0}(x^{*}) = a_{0}$, standard deviation $\mu_{1}(x^{*}) = \sqrt{a_{1}^2 + a_{2}^2 + \hdots + a_{r}^2}$, and so on. This expansion is guaranteed to converge to the correct answer as long as the associated moment generating function converges to the input distribution~\cite{Cit:ernst2012convergence}. As mentioned previously, the dimensionality of the optimization in Eqn.~\ref{eq:final_min2} is much higher than the one in Eqns.~\ref{eq:overall} and~\ref{eq:expectation}. An important distinction of the optimization in Eqn.~\ref{eq:expectation} from the one in Eqn.~\ref{eq:final_min2} is that, although both problems are deterministic, the latter retains information regarding higher-order statistics of $x^{*}$ in the form of the coefficients. In particular, by substituting Eqn.~\ref{eq:poly_exp} in the expressions below, one can compute the moments of $x^{*}$ in terms of $\left[a_{0}^{*},\hdots,a_{r}^{*}\right]$ using,
\begin{align}
\mu_{0} &= \displaystyle\int_{\mathbb{R}^p}x^{*}\rho(\lambda)d\lambda, \nonumber\\
\vdots&\nonumber\\
\mu_{k} &=\displaystyle\int_{\mathbb{R}^p}(x^{*}-\mu_{0})^k\rho(\lambda)d\lambda,
\label{eq:moment_eqns}
\end{align}
where $x^{*}$ is approximated using Eqns.~\ref{eq:poly_exp} and~\ref{eq:final_min2}.
\subsection{Convergence of Polynomial Chaos}
The results by Cameron and Martin~\cite{cameron1947orthogonal} show that a square integrable functional on the set of continuous functions with compact support can be expanded in a convergent series of Hermite polynomials in a countable sequence of Gaussian random variables. This result was extended to the generalized polynomial chaos setting for arbitrary distributions with finite variance under the condition that the underlying probability measure is uniquely determined by its moments~\cite{Cit:ernst2012convergence}.

In our framework, since $x^{*}$ is effectively treated as a random variable, the theorems and proofs from~\cite{Cit:ernst2012convergence} are applicable. We extract the primary results from~\cite{Cit:ernst2012convergence} and present them to the reader for completeness.

As detailed in~\cite{Cit:ernst2012convergence}, the primary assumptions for $x^{*}$ are as follows,
\begin{assumption}{1}{F}\label{assump1}
$x^{*}$ possesses finite moments of all orders, i.e., $\int x^{*}(\lambda)d\lambda<\infty$ for all $k$.
\end{assumption}
\begin{assumption}{2}{F}\label{assump2}
The distribution function $P(x^{*}\leq\xi)$ is continuous.
\end{assumption}
The primary theorems from~\cite{Cit:ernst2012convergence} that guarantee convergence are listed below.
\begin{theorem}
The sequence of orthogonal polynomials associated with the random variable $x^{*}$ satisfying assumptions~\ref{assump1} and~\ref{assump2} is dense in the Hilbert space $L_{2}(\Omega, \sigma, P)$ if and only if the moment problem is uniquely solvable for its distribution.
\end{theorem}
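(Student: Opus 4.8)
The plan is to translate the statement into a density question for polynomials in the weighted space $L_2(\mathbb{R},\mu)$, where $\mu$ denotes the law of $x^{*}$, and then treat the two implications through the classical Hamburger moment problem. Assumption~\ref{assump1} ensures that every monomial $\xi^{n}$ lies in $L_2(\mu)$, so Gram--Schmidt on $\{1,\xi,\xi^{2},\dots\}$ yields the orthonormal family $\{\psi_k\}$, and these polynomials depend on $\mu$ only through its moment sequence. Writing $\mathcal{P}$ for the $L_2(\mu)$-closure of the polynomials, the claim reads: $\mathcal{P}=L_2(\mu)$ if and only if $\mu$ is the unique probability measure carrying its moments.

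For the forward implication I would show that density forces determinacy. Since every bounded function, in particular $\xi\mapsto e^{it\xi}$, belongs to $L_2(\mu)$, density produces polynomials $p_n\to e^{it\xi}$ in $L_2(\mu)$. If $\nu$ were a second measure with the same moments, the orthonormal polynomials coincide for $\mu$ and $\nu$, and a Parseval/approximation argument (in the spirit of M.\ Riesz) passes the limit to $\nu$ as well, giving $\int e^{it\xi}\,d\mu=\int e^{it\xi}\,d\nu$ for every $t$; equality of characteristic functions yields $\mu=\nu$, with Assumption~\ref{assump2} guaranteeing that the common distribution function is recovered unambiguously, with no atoms to reconcile.

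For the converse I would argue by contraposition: if $\mathcal{P}\neq L_2(\mu)$ the orthogonal complement is nontrivial, so there is a nonzero $h\in L_2(\mu)$, which may be taken real, with $\int \xi^{n}h\,d\mu=0$ for all $n\ge 0$. The natural device is to build a competing measure $d\nu=(1+c\,h)\,d\mu$: by orthogonality it shares every moment of $\mu$, and for $c\neq0$ it is genuinely different, which would contradict unique solvability and close the equivalence.

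The hard part is the positivity of this candidate. The relation $1+c\,h\ge0$ holds automatically for small $c$ only when $h$ is essentially bounded, whereas a generic element of $\mathcal{P}^{\perp}\subset L_2(\mu)$ need not be, so the perturbation can leave the cone of nonnegative measures. Resolving this is the delicate core inherited from~\cite{Cit:ernst2012convergence}: one must exploit the finiteness of all moments (Assumption~\ref{assump1}) and the continuity of the distribution function (Assumption~\ref{assump2}) to produce a bounded, or suitably truncated and mollified, orthogonal function $h$ for which $(1+c\,h)\,\mu$ remains a bona fide probability measure, equivalently to exhibit an honest second solution of the moment problem. Controlling this boundedness--positivity trade-off, rather than the bookkeeping of either implication, is where the real work concentrates.
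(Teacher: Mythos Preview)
The paper does not prove this theorem; its proof is the single line ``See~\cite{Cit:ernst2012convergence}.'' So there is no in-paper argument to compare your proposal against, only the cited reference.

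Your strategy follows the classical Riesz route, but the forward implication (density $\Rightarrow$ determinacy) has a genuine gap. The step where ``a Parseval/approximation argument passes the limit to $\nu$'' does not go through: from $p_n\to e^{it\xi}$ in $L_2(\mu)$ you only get that $(p_n)$ is Cauchy in $L_2(\nu)$ (same moments give identical polynomial norms), and there is no mechanism to identify the $L_2(\nu)$-limit with $e^{it\xi}$. Indeed, without Assumption~\ref{assump2} this implication is \emph{false}: every indeterminate Hamburger moment problem admits N-extremal solutions, and these are discrete measures for which polynomials \emph{are} dense in $L_2$. Your invocation of Assumption~\ref{assump2} (``no atoms to reconcile'') is cosmetic; its real role is structural---continuity of the distribution rules out the N-extremal case because N-extremal measures are purely atomic. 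The forward argument that actually works is: density of polynomials in $L_2(\mu)$ forces $\mu$ to be either determinate or N-extremal, and Assumption~\ref{assump2} eliminates the second alternative. On the converse side, Riesz's theorem (determinate $\Rightarrow$ dense) holds without any continuity hypothesis, so Assumption~\ref{assump2} is not the missing ingredient there either; the positivity obstruction you flag in the $(1+ch)\,d\mu$ construction is real, and the standard proofs bypass it by working through the spectral theory of the associated Jacobi operator or the Nevanlinna parametrization rather than by perturbing $\mu$ directly.
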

\begin{proof}
See~\cite{Cit:ernst2012convergence}.
\end{proof}
The conditions for convergence are given by the theorem below, as shown in~\cite{Cit:ernst2012convergence}.
\begin{theorem}
If one of the following conditions for $P(x^{*}\leq\xi)$ satisfying assumptions~\ref{assump1} and~\ref{assump2} is valid, then the moment problem is uniquely solvable and the set of polynomials associated with $x^{*}$ is dense in $L_{2}(\Omega, \sigma, P)$.
\begin{enumerate}
\item $P(x^{*}\leq\xi)$ has compact support.
\item The moment sequence of $\{\mu_{k}\}$ of the distribution satisfies
\begin{align*}
\displaystyle\lim\inf_{k\rightarrow\infty}\frac{\sqrt[2k]{\mu_{2k}}}{2k}<\infty \quad\text{or}\quad \displaystyle\sum_{k=0}^{\infty}\frac{1}{\sqrt[2k]{\mu_{2k}}}=0.
\end{align*}
\item The random variable is exponentially integrable
\begin{align*}
\int_{\mathbb{R}}\exp(a|\lambda|)P(d\lambda) < \infty.
\end{align*}
\item If the distribution has a symmetric, differentiable and strictly positive density $l_{x^{*}}$ and for a real number $\lambda_{0}$ there holds
    \begin{align*}
    \int_{-\infty}^{\infty}-\frac{\log(l_{x^{*}}(\lambda))}{1+\lambda^{2}}d\lambda=\infty \quad\text{and}\quad -\frac{\lambda l^{'}_{x^{*}}(\lambda)}{l_{x^{*}}} \nearrow \infty\,\, (\lambda\rightarrow\infty,\lambda\geq\lambda_{0}).
    \end{align*}
\end{enumerate}
\end{theorem}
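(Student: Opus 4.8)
The plan is to observe that, by the preceding (equivalence) theorem, the density of the orthogonal polynomials associated with $x^{*}$ in $L_{2}(\Omega,\sigma,P)$ is \emph{equivalent} to the moment problem for the law of $x^{*}$ being uniquely solvable, i.e.\ determinate. Hence it suffices to prove that each of the four stated conditions, under Assumptions~\ref{assump1} and~\ref{assump2}, forces determinacy of the Hamburger moment problem; density then follows immediately from that equivalence. In other words, the statement decouples into four independent sufficient-condition-for-determinacy claims, each of which is a named classical criterion in the moment-problem literature, and the only work is to identify them and verify their hypotheses.

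For conditions 1--3 I would argue through the characteristic (or moment-generating) function. If the law of $x^{*}$ has compact support (condition 1), every moment is finite and the characteristic function $\varphi(t)=\E[e^{itx^{*}}]$ extends to an entire function of exponential type; an entire characteristic function is determined by its Taylor coefficients, which are precisely the moments, so the law is determinate (equivalently, one invokes Weierstrass approximation on the support). Condition 3 is handled the same way: exponential integrability, $\int e^{a|\lambda|}P(d\lambda)<\infty$ for some $a>0$, makes the moment-generating function finite on a neighbourhood of the origin, so $\varphi$ is analytic in the strip $|\operatorname{Im} t|<a$ and the moments again pin down the distribution. Condition 2 is Carleman's criterion: I would invoke the classical theorem that divergence of $\sum_{k}\mu_{2k}^{-1/(2k)}$ implies determinacy, noting that the stated growth bound on $\sqrt[2k]{\mu_{2k}}/(2k)$ is exactly what guarantees this divergence.

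Condition 4 is the delicate one and I expect it to be the main obstacle. Here analyticity is unavailable, since a symmetric density with only polynomial-type tails need not possess any finite exponential moment, so conditions 1 and 3 do not apply. Instead the argument is the Krein--Lin route: the divergence of the logarithmic integral $\int -\log(l_{x^{*}}(\lambda))/(1+\lambda^{2})\,d\lambda=\infty$ is the borderline (Krein) condition, which on its own does not suffice, and one additionally needs the regularity supplied by Lin's monotonicity hypothesis, namely that $-\lambda l'_{x^{*}}(\lambda)/l_{x^{*}}(\lambda)$ increases to $+\infty$. The plan would be to cite Lin's theorem, which states precisely that for a symmetric, differentiable, strictly positive density satisfying this monotonicity, divergence of the logarithmic integral implies determinacy. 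Verifying the hypotheses of Lin's theorem and reconciling the exact form of the monotonicity condition with the reference is the only nontrivial bookkeeping; the remainder is an assembly of standard results, and since the companion paper~\cite{Cit:ernst2012convergence} supplies the underlying determinacy theorems, the proof ultimately reduces to recognising the four conditions as the classical criteria (compact support, Carleman, analytic characteristic function, Krein--Lin) and then applying the equivalence of the preceding theorem.
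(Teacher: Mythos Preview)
Your proposal is correct and in fact goes well beyond the paper's own ``proof,'' which consists solely of the citation ``See~\cite{Cit:ernst2012convergence}.'' Your sketch accurately identifies the four conditions as the classical determinacy criteria (compact support, Carleman, exponential integrability via analytic moment-generating function, and Krein--Lin) and correctly reduces the statement to the equivalence of the preceding theorem, which is precisely how the cited reference establishes the result.
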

\begin{proof}
See~\cite{Cit:ernst2012convergence}.
\end{proof}
As long as the above assumptions and conditions are satisfied,   the expansion in Eqn.~\ref{eq:poly_exp} is guaranteed to converge to the solution for the problem in Eqn.~\ref{eq:final_min2}. We now analyze the error incurred as a result of the interchange of the integral and minimum operators in Eqn.~\ref{eq:final_min}.
\subsection{Error due to Interchange of Integral and Minimum operators}
\label{sec:interchange}
A key approximation in the derivation for Eqn.~\ref{eq:final_min2} was the interchange of integral and minimum operators in Eqn.~\ref{eq:final_min}. We now bound the error introduced due to the interchange of the two operators.
\begin{lemma}
If $f(x,\lambda)$ in Eqn.~\ref{eq:opt_simple} is Lipschitz continuous with respect to $x$ with a Lipschitz constant $L$, then the error due to the interchange of the integral and minimization in Eqn.~\ref{eq:final_min} has the following bound,
\begin{equation}
\left|\mathbb{E}\left[\displaystyle\min_{x}\,\,f(x,\lambda)\right]-\displaystyle\min_{x}\,\,\mathbb{E}\left[f(x,\lambda)\right]\right| \leq L \displaystyle\int_{\mathbb{R}^p}\left|\hat{p}(\lambda) - q\right|\rho d\lambda, \nonumber
\end{equation}
where $\hat{p}(\lambda)= \displaystyle\argmin_{x} f(x,\lambda)$ and $q=\displaystyle\argmin_{x}\,\,\mathbb{E}\left[f(x,\lambda)\right]$.
\end{lemma}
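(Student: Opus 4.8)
The plan is to rewrite both sides of the difference in terms of the pointwise and averaged minimizers, collapse them into a single integral, and then bound that integral by the triangle inequality followed immediately by the Lipschitz hypothesis. The chain of inequalities is short; the real work is setting it up cleanly.

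First I would observe that by the very definitions of $\hat{p}$ and $q$ one has $\min_x f(x,\lambda) = f(\hat{p}(\lambda),\lambda)$ for each fixed $\lambda$, while $\min_x \mathbb{E}[f(x,\lambda)] = \mathbb{E}[f(q,\lambda)]$. Substituting these and using linearity of the integral lets me write the whole quantity as
\begin{equation*}
\mathbb{E}\left[\min_x f(x,\lambda)\right] - \min_x \mathbb{E}\left[f(x,\lambda)\right] = \int_{\mathbb{R}^p}\bigl(f(\hat{p}(\lambda),\lambda) - f(q,\lambda)\bigr)\rho(\lambda)\,d\lambda.
\end{equation*}
Next, taking absolute values and passing them inside the integral (the standard inequality $|\int g\,\rho\,d\lambda| \le \int |g|\,\rho\,d\lambda$, valid since $\rho$ is a density) reduces the task to bounding $\int_{\mathbb{R}^p} |f(\hat{p}(\lambda),\lambda) - f(q,\lambda)|\rho\,d\lambda$. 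At this point I would invoke Lipschitz continuity of $f$ in its first argument, giving the pointwise estimate $|f(\hat{p}(\lambda),\lambda) - f(q,\lambda)| \le L|\hat{p}(\lambda) - q|$ for every $\lambda$; integrating this against $\rho$ yields the claimed bound directly. As a sanity check, the elementary inequality $\mathbb{E}[\min_x f(x,\lambda)] \le \min_x \mathbb{E}[f(x,\lambda)]$—valid because the pointwise minimum lies below $f(q,\lambda)$ for every $\lambda$—shows the bracketed difference is nonpositive, so the absolute value equals the integral of the nonnegative quantity $f(q,\lambda) - f(\hat{p}(\lambda),\lambda)$, consistent with the sign-free Lipschitz estimate.

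The main obstacle is not the inequalities but the measure-theoretic bookkeeping that makes the first step legitimate. One must ensure the pointwise minimizer $\lambda \mapsto \hat{p}(\lambda)$ can be chosen as a measurable function of $\lambda$, so that $f(\hat{p}(\lambda),\lambda)$ is measurable and the substitution $\mathbb{E}[\min_x f] = \mathbb{E}[f(\hat{p}(\lambda),\lambda)]$ is justified; this is where a measurable-selection argument is needed, either by appealing to smoothness and uniqueness of the minimizer or to a standard selection theorem. One also needs integrability of $\hat{p}(\lambda)$ and of the relevant values of $f$ so that every expectation is finite, which is supplied by the moment hypotheses on $x^{*} = \hat{p}(\lambda)$ already imposed in Assumption~\ref{assump1}.
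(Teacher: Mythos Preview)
Your proposal is correct and follows essentially the same argument as the paper: rewrite both quantities via the minimizers $\hat{p}(\lambda)$ and $q$, combine into a single integral, pass the absolute value inside, and apply the Lipschitz bound pointwise. The only difference is that you add measure-theoretic caveats (measurable selection of $\hat{p}$, integrability) and a sign-consistency check that the paper's proof omits entirely.
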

\begin{proof}
The above definition of $\hat{p}(\lambda)$ gives,
\begin{align}
\mathbb{E}\left[\displaystyle\min_{x}\,\,f(x,\lambda)\right] = \displaystyle\int_{\mathbb{R}^p}f(\hat{p}(\lambda),\lambda)\rho d\lambda,
\label{eq:expect_expand}
\end{align}
 and using the above definition of $q$ gives,
we get,
\begin{align}
\displaystyle\argmin_{x}\,\,\mathbb{E}\left[f(x,\lambda)\right] = \displaystyle\int_{\mathbb{R}^p}f(q,\lambda)\rho d\lambda.
\label{eq:min_exp}
\end{align}
Note that the error due to the interchange in Eqn.~\ref{eq:final_min} is given by,
\begin{align}
\left|\mathbb{E}\left[\displaystyle\min_{x}\,\,f(x,\lambda)\right]-\displaystyle\min_{x}\,\,\mathbb{E}\left[f(x,\lambda)\right]\right|.
\end{align}
Note that since $f(x,\lambda)$ is Lipschitz continuous with respect to $x$ with a constant $L$, we get,
\begin{align}
\left|\mathbb{E}\left[\displaystyle\min_{x}\,\,f(x,\lambda)\right]-\displaystyle\min_{x}\,\,\mathbb{E}\left[f(x,\lambda)\right]\right| &= \left|\displaystyle\int_{\mathbb{R}^p}(f(\hat{p}(\lambda),\lambda)  - f(q,\lambda))\rho d\lambda\right|, \nonumber\\
&\leq \displaystyle\int_{\mathbb{R}^p}\left|f(\hat{p}(\lambda),\lambda)  - f(q,\lambda)\right|\rho d\lambda, \nonumber \\
&\leq L\displaystyle\int_{\mathbb{R}^p}\left|\hat{p}(\lambda) - q\right|\rho d\lambda
\end{align}
\end{proof}
As can be seen above, the bound depends on the $\rho$ weighted deviation of $\hat{p}(\lambda)$ from $q$. Here $\hat{p}(\lambda)$ is the optimal solution in Eqn.~\ref{eq:opt_simple} parameterized by $\lambda$ and $q$ is the optimal solution in Eqn.~\ref{eq:expectation} for the expected value of $f(x,\lambda)$. In particular, if $\hat{p}(\lambda)$ deviates from $q$ in the tail of $\rho$, the error due to the interchange is expected to be minimal. Moreover, the Lipschitz constant $L$ bounds the variation of $f(x,\lambda)$ as a function of the argument. Thus, if $L$ is small, the above bound is again expected to be small. Note that error is only expected to be $0$ if $\hat{p}(\lambda)=q \,\,\forall\lambda$.

We now analyze different properties related to the polynomial chaos based stochastic optimization approach.

\subsection{Preservation of convexity}
Convexity is an important property that is frequently exploited in optimization~\cite{Cit:boyd2004convex}. The optimization of convex functions over convex domains is tractable using well-known polynomial time algorithms. Non-convex optimization, on the other hand, is in general NP-hard~\cite{sahai2020dynamical}. Most generic optimization software are able to efficiently compute globally optimal solutions in convex settings. Thus, if the underlying function $f(x,\lambda)$ is convex, it is important that the polynomial chaos based approach preserve convexity. Therefore, we now explore the impact of the expansion on the convexity properties of $f(x,\lambda)$.
\begin{lemma}
If $f(x,\lambda)$ in Eqn.~\ref{eq:opt_simple} is convex for all values of $\lambda$, then $F(a_{1},\hdots,a_{r})$ in Eqn.~\ref{eq:final_min2} is also convex.
\end{lemma}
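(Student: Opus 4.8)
The plan is to reduce the claim to two elementary convexity-preserving operations: precomposition with an affine map, and integration against a nonnegative weight. First I would fix $\lambda$ and note that, with $\lambda$ frozen, each $\psi_k(\lambda)$ is simply a real number, so the inner argument $\sum_{k=0}^{r} a_k \psi_k(\lambda)$ is an affine (indeed linear) function of the coefficient vector $a = (a_0,\dots,a_r)$. Equivalently, writing $\Psi(\lambda) = (\psi_0(\lambda),\dots,\psi_r(\lambda))$, this reparameterization is the linear map $a \mapsto \langle a, \Psi(\lambda)\rangle$.

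Next I would invoke the standard fact that the composition of a convex function with an affine map is again convex. Since $f(\cdot,\lambda)$ is convex in its first argument for every $\lambda$ by hypothesis, the function $g_\lambda(a) := f\big(\sum_{k=0}^{r} a_k \psi_k(\lambda),\lambda\big)$ is convex in $a$ for each fixed $\lambda$. Explicitly, for coefficient vectors $a,b$ and $\theta \in [0,1]$, linearity of the inner sum gives $\sum_k (\theta a_k + (1-\theta) b_k)\psi_k(\lambda) = \theta \sum_k a_k \psi_k(\lambda) + (1-\theta)\sum_k b_k \psi_k(\lambda)$, and convexity of $f$ in its first slot then yields the pointwise inequality $g_\lambda(\theta a + (1-\theta)b) \le \theta\, g_\lambda(a) + (1-\theta)\, g_\lambda(b)$.

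Finally I would integrate this pointwise inequality. Because $F(a) = \int_{\mathbb{R}^p} g_\lambda(a)\,\rho(\lambda)\,d\lambda$ and $\rho(\lambda) \ge 0$, integrating the inequality above against $\rho$ preserves its direction and gives $F(\theta a + (1-\theta)b) \le \theta\, F(a) + (1-\theta)\, F(b)$, which is precisely the convexity of $F$ in the coefficient variables.

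I do not anticipate a substantive obstacle: both ingredients are textbook convexity-preserving maps, and the real content of the lemma is just the observation that the polynomial-chaos reparameterization $a \mapsto \sum_k a_k \psi_k(\lambda)$ is \emph{linear} in the coefficients and therefore cannot destroy convexity. The only point requiring mild care is ensuring that $F$ is finite on the domain of interest, so that the integrated inequality is meaningful; this is covered by the finite-moment and integrability assumptions already in force in this section (in particular Assumption~\ref{assump1}).
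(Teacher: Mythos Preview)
Your proposal is correct and follows essentially the same route as the paper: fix $\lambda$, use linearity of $a\mapsto\sum_k a_k\psi_k(\lambda)$ together with convexity of $f(\cdot,\lambda)$ to get a pointwise convexity inequality, then integrate against $\rho(\lambda)\ge 0$. Your framing in terms of the two convexity-preserving operations (affine precomposition and nonnegative mixture) is slightly more abstract, and your remark about finiteness of $F$ is a nice extra check, but the substance is identical.
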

\begin{proof}
For brevity, let $F(\vec{a}) = F(a_{1},\hdots,a_{r})$. Taking $\vec{a}=\theta \vec{b} + (1-\theta)\vec{c}$ implies that,
\begin{align}
a_{k} = \theta b_{k} + (1-\theta)c_{k},\nonumber
\label{eq:ai}
\end{align}
and dropping the argument of $\psi_{k}(\lambda)$ (for simplicity) gives,
\begin{align}
f(\theta\displaystyle\sum_{k}b_{k}\psi_{k} + (1-\theta)\displaystyle\sum_{k}c_{k}\psi_{k},\lambda)\leq \theta f(\displaystyle\sum_{k}b_{k}\psi_{k},\lambda) + (1-\theta)f(\displaystyle\sum_{k}c_{k}\psi_{k},\lambda),
\end{align}
since $f(x,\lambda)$ is a convex function. Note that one uses the polynomial expansion outlined in Eqn.~\ref{eq:poly_exp} to obtain the above expression. Using the above expression in conjunction with the fact that $\rho(\lambda)\geq 0$ gives,
\begin{align}
\displaystyle\int_{\mathbb{R}^n}f(\theta\displaystyle\sum_{k}b_{k}\psi_{k} + (1-\theta)\displaystyle\sum_{k}c_{k}\psi_{k},\lambda)\rho(\lambda)d\lambda
\leq \theta & \displaystyle\int_{\mathbb{R}^n}f(\displaystyle\sum_{k}b_{k}\psi_{k},\lambda)\rho(\lambda)d\lambda \nonumber\\
&+ (1-\theta)\displaystyle\int_{\mathbb{R}^n}f(\displaystyle\sum_{k}c_{k}\psi_{k},\lambda)\rho(\lambda)d\lambda ,
\end{align}
which can be rewritten as,
\begin{align}
F(\vec{a}) \leq \theta F(\vec{b}) + (1-\theta)F(\vec{c}),
\end{align}
or,
\begin{align}
F(\theta \vec{b} + (1-\theta)\vec{c}) \leq \theta F(\vec{b}) + (1-\theta)F(\vec{c}).
\end{align}
The above inequality proves that $F(\vec{a})$ is convex.
\end{proof}
The preservation of convexity is analogous to the preservation of Hamiltonian structure by the polynomial chaos framework~\cite{pasini2013polynomial,pasini1polynomial}.
\begin{remark}
It is easy to show that if the underlying cost function $f(x,\lambda)$ is a homogeneous polynomial~\cite{cox2013ideals}, then the resulting cost function $F(\vec{a})$ is also homogeneous. This can be shown by using the fact that the right hand side of the expansion $x=\displaystyle\sum_{k=0}^{r}a_{k}\psi_{k}(\lambda)$ is effectively a dot product between the vectors $\left[a_{0},a_{1},\hdots,a_{r}\right]$ and $\left[H_{0},H_{1},\hdots,H_{r}\right]$. Since this form is linear in $\vec{a}$, any homogeneous polynomial will also be homogeneous with the same degree as the original system. In a similar fashion one can show that the transformation preserves the sum-of-squares (S.O.S.) hierarchy~\cite{lasserre2007sum}. In particular, since any sum-of-squares polynomial can, without loss of generality, be written in the following quadratic form, $z^TQz$ where $z=[1, x_{1}, x_{2},\hdots,x_{1}^2,x_{1}x_{2},\hdots x_{m}^d]^T$. When one expands each variable using the orthogonal polynomials, it can be shown that the quadratic structure is preserved, implying that the S.O.S. structure is preserved by the transformation.
\end{remark}

\section{Inclusion of constraints \& dual formulations}\label{sec:constraints}
In the previous section, we omitted the constraints in Eqn.~\ref{eq:overall} for simplicity. The imposition of constraints can be addressed using the Lagrangian framework~\cite{Cit:boyd2004convex} as follows,
\begin{align}
\mathcal{L}(x,u,v,\lambda) = f(x,\lambda) + \displaystyle\sum_{i=1}^{m}u_{i}g_{i}(x,\lambda)  + \displaystyle\sum_{j=1}^{n}v_{j}h_{j}(x,\lambda).
\label{eq:lagrangian}
\end{align}
Using the polynomial expansion in Eqn.~\ref{eq:poly_exp} and integrating with respect to $\rho(\lambda)d\lambda$ gives,
\begin{align}
\tilde{\mathcal{L}}(\vec{a},u,v) = F(\vec{a}) + \displaystyle\sum_{i=1}^{m}u_{i}G_{i}(\vec{a}) + \displaystyle\sum_{j=1}^{n}v_{j}H_{j}(\vec{a}),
\label{eq:lagrangian_poly}
\end{align}
where $\vec{a} = \left[a_{0},\hdots,a_{r}\right]$. Both $G_{i}$ and $H_{j}$ are the terms obtained by integrating $g_{i}$ and $h_{j}$ weighted by $\rho(\lambda)$ (note that the definition is similar to the one for $F$ in Eqn.~\ref{eq:final_min2}). It is easy to show that as a consequence of $\rho\geq0$, the inequality and equality constraints are preserved i.e. $H_{j}(\vec{a}) =0$ and $G_{i}(\vec{a}) \leq 0$. Thus, $\tilde{\mathcal{L}}(u,v)$ in the above equation is the corresponding Lagrangian for the stochastic problem. Now one can define the Lagrange dual function as follows,
\begin{align}
\hat{\L}(u,v) = \displaystyle\min_{\vec{a}}\tilde{\mathcal{L}}(\vec{a},u,v).
\label{eq:dual1}
\end{align}
Note that Eqn.~\ref{eq:overall} is the primal form. It can be shown that $\hat{\L}(u,v) \leq F^{*}$, where $F^{*}$ is the optimal solution of primal formulation~\cite{Cit:boyd2004convex}. Thus, one gets a dual optimization problem of the form,
\begin{align}
\displaystyle\max_{u,v}\,\, &\hat{\L}(u,v), \nonumber \\
\text{subject to}\quad u &\geq 0.
\label{eq:dual2}
\end{align}
The difference between $F^{*}$ and $\displaystyle\max_{u,v}\,\hat{\L}(u,v)$ is known as the duality gap. The condition $F^{*} = \displaystyle\max_{u,v}\,\hat{\L}(u,v)$ is known as the strong duality. Both the conditions hold for the stochastic setting considered here. One can also derive the corresponding Karush-Kuhn-Tucker (KKT) conditions that guarantee optimality~\cite{cit:karush,cit:kuhn} of the form,
\begin{align}
\nabla F(\vec{a}) + \displaystyle\sum_{i=1}^{m}u_{i}\nabla G_{i}(\vec{a}) + \displaystyle\sum_{j=1}^{n}v_{j}\nabla H_{j}(\vec{a}) &= 0,\nonumber
\end{align}
along with the following constraints,
\begin{align}
G_{i}(\vec{a}) &\leq 0, \,\, i=1,\hdots,m\nonumber \\
H_{j}(\vec{a}) &= 0,j=1,\hdots,n\nonumber \\
u_{i} &\geq 0 \,\, i=1,\hdots,m\nonumber \\
u_{i} G_{i}(\vec{a}) &= 0,\,\, i=1,\hdots,m.\nonumber \\
\label{eq:kkt}
\end{align}
The above conditions are necessary for optimality. For convex problems, the KKT conditions are also sufficient for optimality. Thus, by computing the KKT conditions one can obtain the optimal solution (condition on strong duality e.g. Slater's condition). We refer the reader to~\cite{Cit:boyd2004convex} for further details.

We note that, in general, a Lagrange multiplier approach can, under certain conditions, convert minima (or maxima) into saddle points as shown below. Saddles points have both increasing as well as decreasing directions on the energy landscape (for further information about saddle points we refer the reader to~\cite{Gucken:book}).
\begin{lemma}
Let $x^{*}$ be a minimum (similar argument holds for maxima) to the following 1-dimensional problem,
\begin{align}
\displaystyle\min_{x \in \mathbb{R}^d}& \,\, f(x,\lambda), \nonumber\\
h(x,\lambda) &=0,\,\, .
\label{eq:overall_sad}
\end{align}
Then $x^{*}$ becomes a saddle point in the Lagrangian formulation,
\begin{align}
\displaystyle\min_{x,v}&\,\,\mathcal{L}(x,v,\lambda)  \nonumber\\
\mathcal{L}(x,v,\lambda) &= f(x,\lambda) + vh(x,\lambda),
\label{eq:lagrangian_sad}
\end{align}
if $\nabla h \neq 0$.
\end{lemma}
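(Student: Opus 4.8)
The plan is to show that the constrained minimizer $x^{*}$, together with its associated Lagrange multiplier, is a critical point of $\mathcal{L}$, and then to prove that the Hessian of $\mathcal{L}$ at this point is indefinite, which is exactly the defining property of a saddle.

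First I would invoke the first-order Lagrange multiplier (KKT) conditions already discussed above. Since $x^{*}$ is a constrained minimum and $\nabla h \neq 0$ supplies the constraint qualification in one dimension, there exists a multiplier $v^{*}$ with $\nabla_x \mathcal{L}(x^{*},v^{*},\lambda) = f'(x^{*},\lambda) + v^{*} h'(x^{*},\lambda) = 0$, while $\nabla_v \mathcal{L}(x^{*},v^{*},\lambda) = h(x^{*},\lambda) = 0$ holds by feasibility. Hence $(x^{*},v^{*})$ is a stationary point of $\mathcal{L}$ regarded as a function on $\mathbb{R}^2$, so a second-order analysis is meaningful.

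Next I would form the Hessian of $\mathcal{L}$ with respect to $(x,v)$ at $(x^{*},v^{*})$, namely the bordered Hessian
\begin{align}
\mathbf{H} = \begin{bmatrix} f''(x^{*},\lambda) + v^{*} h''(x^{*},\lambda) & h'(x^{*},\lambda) \\ h'(x^{*},\lambda) & 0 \end{bmatrix}.
\end{align}
Because $\mathcal{L}$ is linear in $v$, the $(v,v)$ entry vanishes and the determinant collapses to $\det \mathbf{H} = -\left(h'(x^{*},\lambda)\right)^2$. The hypothesis $\nabla h \neq 0$ forces $h'(x^{*},\lambda) \neq 0$, so $\det \mathbf{H} < 0$ strictly. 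For a symmetric $2 \times 2$ matrix the determinant is the product of its two real eigenvalues, so a negative determinant guarantees one positive and one negative eigenvalue. The Hessian is therefore indefinite, meaning $\mathcal{L}$ strictly increases along one direction through $(x^{*},v^{*})$ and strictly decreases along another, which is precisely a saddle point.

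The main obstacle is really a structural observation rather than a calculation: the indefiniteness is forced entirely by the off-diagonal coupling $h'$ and is independent of the diagonal curvature terms $f''$ and $v^{*} h''$. This is why the single assumption $\nabla h \neq 0$ suffices, regardless of how convex or flat $f$ is along the constraint, the linear coupling of $x$ and $v$ through the constraint term destroys definiteness. I would close by remarking that the identical bordered-Hessian computation applies verbatim when $x^{*}$ is a maximum, since the sign of the diagonal term never enters $\det \mathbf{H}$.
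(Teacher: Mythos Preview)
Your proposal is correct and follows essentially the same approach as the paper: both establish stationarity via the first-order conditions, form the $2\times 2$ bordered Hessian, and conclude indefiniteness from the fact that the product of its eigenvalues equals $-(\nabla h)^{2}<0$. The only cosmetic difference is that the paper writes out the characteristic polynomial explicitly before reading off the eigenvalue product, whereas you compute the determinant directly; the content is the same.
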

\begin{proof}
For Eqn.~\ref{eq:lagrangian_sad}, the optimality conditions are,
\begin{align}
\nabla f(x,\lambda) + v\nabla h(x,\lambda) &=0,\nonumber\\
\mathcal{L}_{v} = h(x,\lambda) &=0.
\label{eq:opt_sad}
\end{align}
Let $x^{*}$ satisfy the above equations, then the stability of the point is determined by the eigenvalues of the Jacobian matrix,
\begin{align}
J = \begin{bmatrix}
\nabla^{2}f + v\nabla^{2}h & \nabla h \\
\nabla h  & 0
\end{bmatrix}.
\end{align}
The characteristic equation for the eigenvalues $\eta_1,\eta_{2}$ are given by,
\begin{align}
\eta^{2} - \eta(\nabla^{2}f + v\nabla^{2}h) - (\nabla h)^2 = 0.
\end{align}
Thus, this implies that the product of the eigenvalues $\eta_{1}\eta_{2} = - (\nabla h)^2 $ can either be negative or zero. In the case the product is non-zero, it is easy to see that $x^{*}$ is a saddle (since it must have one positive eigenvalue). We note that $\nabla h = 0$ corresponds to $h = c$ which is a trivial constraint and the minimization problem is rendered superfluous.
\end{proof}

To address the above issue, we intend to explore the future integration of the polynomial chaos framework with barrier~\cite{cit:karmarkar} and penalty methods~\cite{cit:bertsekas} for optimization.

\section{Results}
We now demonstrate the polynomial chaos based stochastic optimization framework on a few illustrative optimization examples. We start with a simple 1-dimensional quadratic unconstrained optimization problem. We increase the problem complexity by illustrating the approach on a standard non-convex 2-dimensional optimization problem. We finally include constraints and demonstrate the approach on a  complicated human-machine task scheduling problem that is inspired from real-world aerospace applications.
\subsection{Simple 1-Dimensional Optimization Example}
We now demonstrate the proposed approach on a simple illustrative optimization problem. Consider an objective function of the form,
\begin{align}
\displaystyle\min_{x} (1+\lambda)x^2 + x,
\label{eq:simple_ex}
\end{align}
where $\lambda$ is a normally distributed random variable with mean $\mu_0(\lambda) = 0.0$ and standard deviation $\mu_{1}(\lambda) = 0.1$. Since $\lambda$ is normally distributed, we expand $x$ in terms of Hermite polynomials~\cite{BeyondWienerAskey}. Expanding $x$ to the second order results in the following expression,
\begin{align}
x\approx a_{0}\psi_{0} + a_{1}\psi_{1} + a_{2}\psi_{2},
\label{eq:ex_exp}
\end{align}
We now perform the steps outlined in Eqns.~\ref{eq:expanded_opt} to~\ref{eq:final_min2}, which results in the following optimization problem,
\begin{align}
\displaystyle\min_{\left[a_{0},a_{1},a_{2}\right]} a_{0}^2 + a_{1}^2 + a_{2}^2
 + \mu_{1}(\lambda)\left[2a_{0}a_{1} + 4a_{1}a_{2}\right] + a_{0}.
 \label{eq:simple_pc}
 \end{align}
 Solving eqn.~\ref{eq:simple_pc} using standard quadratic programming solvers yields a mean of $\mu_{0}(x^{*}) \approx -0.505$ and standard deviation $\mu_{1}(x^{*}) = 0.054$. This compares favorably with $1000$ Monte Carlo samples for estimating the mean $\mu_{0}(x^{*}) = -0.508$ and standard deviation $\mu_{1}(x^{*})=0.055$. The above demonstration illustrates how a single optimization computation can yield moments that are close to the statistics of the optimal solution. Thus, the polynomial chaos approach translates into significant computational savings over sampling based methodologies.
 
 \subsection{Two dimensional example}
Consider the Himmelblau test function~\cite{himmelblau2018applied} for optimization given by,
\begin{align}
    f(x_1,x_2) & = (x_{1}^2 + x_{2} - 11)^2 + (x_{1} + x_{2}^2 - 7)^2.
    \label{eq:himmel}
\end{align}
This well-known function has four local minima and one maximum as shown in Fig.~\ref{fig:himmel}. It is frequently used to test new optimization algorithms. 
\begin{figure}
    \centering
    \includegraphics[width = 0.8\textwidth]{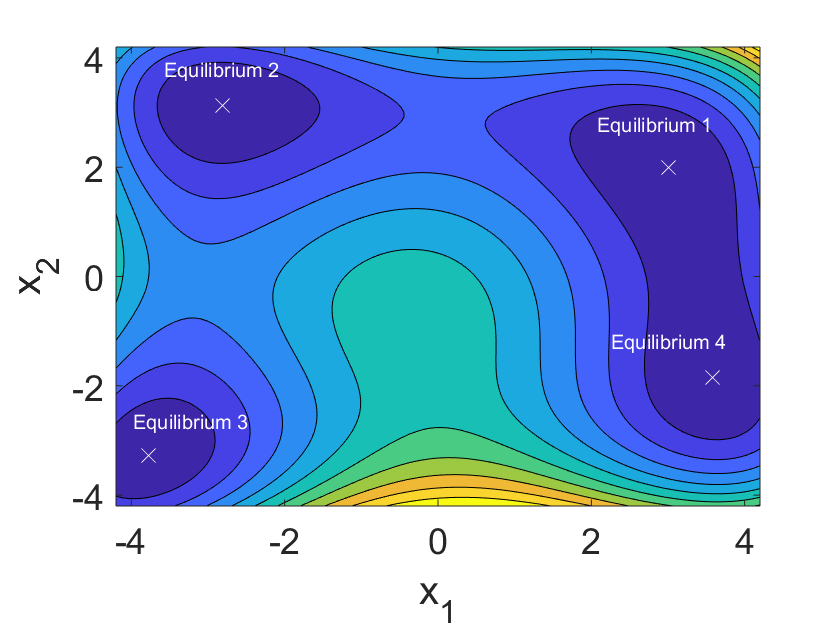}
    \caption{The cost function and equilibria in the nominal Himmelblau example.}
    \label{fig:himmel}
\end{figure}

Let us now consider a random version of the above cost function,
\begin{align}
    f(x_1,x_2) & = (x_{1}^2 + x_{2} - 11 + 2.0\lambda)^2 + (x_{1} + x_{2}^2 - 7)^2.
    \label{eq:himmel_rand}
\end{align}
where $\lambda$ is random variable drawn from the standard normal distribution. We compute the four minima for the deterministic case (Eqn.~\ref{eq:himmel}) using multiple initial conditions (one initial condition is picked in the basin of attraction for each equilibrium). The computed equilibria are depicted in Fig.~\ref{fig:himmel} and tabulated in the second column of table~\ref{tab:himmel}. 

We then introduce the uncertainty as captured in Eqn.~\ref{eq:himmel_rand}. The baseline statistics (mean and standard deviations) for the four equilibria are computed using crude Monte Carlo sampling. In particular, the statistics of every equilibrium is computed using $1000$ independent Monte Carlo samples. The computed mean and standard deviations are tabulated in table~\ref{tab:himmel}. We then use a simple first order polynomial chaos expansion for $x_{1}$ and $x_{2}$ of the form,
\begin{align}
x_{1} &\approx a_{0}\psi_{0} + a_{1}\psi_{1}\nonumber\\
x_{2} &\approx b_{0}\psi_{0} + b_{1}\psi_{1}.
\label{eq:himmel_approx}
\end{align}
\begin{table}
\centering
\begin{tabular}{|c|c|c|c|c|c| }
 \hline
\multicolumn{1}{|c|}{Equlibria} & \multicolumn{1}{|c|}{Deterministic} & \multicolumn{2}{|c|}{Random - Monte Carlo} & \multicolumn{2}{|c|}{Random - Polynomial Chaos}\\
 \hline
& & Mean & Std. Dev. & Mean & Std. Dev.\\
 \hline
Equil. $1$ & $(3.0,2.0)$ & $(2.98, 2.0)$   & $(0.36,0.09)$   & $(2.93,2.06)$&  $(0.48,0.15)$\\
 \hline
Equil. $2$& $ (-2.81,3.13) $ & $(-2.77,3.13)$ &   $(0.35,0.06)$  & $(-2.79,3.12)$   &  $(0.31,0.06)$\\
 \hline
Equil. $3$ & $(-3.78,-3.28)$ & $(-3.76,-3.28)$ &  $(0.27,0.04)$ & $(-3.77,-3.29)$ & $(0.29,0.05)$\\
 \hline
 Equil. $4$ & $(3.58,-1.85)$ & $(3.59,-1.85)$& $(0.27,0.07)$ & $(3.58,-1.83)$ & $(0.22,0.09)$\\
 \hline
\end{tabular}
\caption{Statistics of the equilibria of the Himmelblau example.}
\label{tab:himmel}
\end{table}
Using the orthonormality conditions as described in Eqn.~\ref{eq:ortho}, results in the following optimization problem,
\begin{align}
\displaystyle\min_{\left[a_{0},a_{1},b_{0},b_{1}\right]} & (a_{0}^4 + b_{0}^4) + 3(a_{1}^4 + b_{1}^4) + 6(a_{0}^2a_{1}^2 + b_{0}^2b_{1}^2) + 2(a_{0}^2b_{0} + a_{0}b_{0}^2 + a_{0}b_{1}^2) \nonumber\\
&+ 4(a_{0}a_{1}b_{1} + a_{1}b_{0}b_{1}) - 21(a_{0}^2 + a_{1}^2) - 13(b_{0}^2 + b_{1}^2) + 8a_{0}a_{1}  \nonumber\\
&+ 2a_{1}b_{0} - 14a_{0} -22b_{0} + 4b_{1} + 174.
\label{eq:himmel_pc}
\end{align}
For the polynomial chaos expansion we perform a single optimization for each equilibria by picking an initial condition in its basin of attraction. The optimal values of $\left[a_{0},a_{1},b_{0},b_{1}\right]$ are used to compute the mean and standard deviations of the equilibrium values under uncertainty. The results are shown in table~\ref{tab:himmel}. We find that in comparison to Monte Carlo, the polynomial chaos approach for stochastic optimization fares well. In particular, we find that the approach gets within $2-3$\% of the mean value with the simple linear approximation in Eqn.~\ref{eq:himmel_approx}. The standard deviation values are also very close with the exception of equilibrium $1$. We believe that the cause of this is the quadratic nature of the local gradient that will require second order expansions for $x_{1}$ and $x_{2}$.

Let us now consider optimization problems of greater complexity such as a formulation of human-machine optimization subject to constraints.
\subsection{Application to human-machine optimization}
We now consider the application of the polynomial chaos for stochastic optimization in the context of task scheduling for human-machine interaction. In particular, we consider the problem of scheduling tasks for human-UAV missions in the presence of uncertainty. Using the formulation detailed in~\cite{cit:peters}, we apply our approach to a finite horizon scheduling problem. Our goal is to optimally schedule tasks in the presence of behavioral variance that accounts for levels of arousal or stress on cognitive performance~\cite{yerkes1908relation}. For example, the Yerkes-Dodson law is frequently used to capture the notion that intermediate levels of stress or arousal give rise to the best human performance~\cite{yerkes1908relation}. This law can be imposed in the scheduling algorithm as a constraint that ensures that the \textit{task load} of an unmanned aerial vehicle (UAV) operator is kept within predetermined bounds~\cite{cit:peters}.

The overall problem can posed as a linear program by relaxing the assumption that each task must fit into a single slot as done in~\cite{cit:peters}. Let $x_{ij}$ denote the fraction of task $i$ that is performed in time (task) slot $j$ (see Fig.~\ref{Fig:task_sched}). Simplifying the formulation in~\cite{cit:peters}, each task $i$ can be specified by the following 3-tuple $\left[t_{i}, r_{i}, \delta w_{i}\right]$. Here $t_{i}$ is the amount of time it takes to complete task $i$, $r_{i}$ is the reward for completing the task, and $\delta w_{i}$ is the increase in task load of the operator due to the completion of the task.
\begin{figure}[htb!]
    \begin{center}
    \includegraphics[scale=0.45]{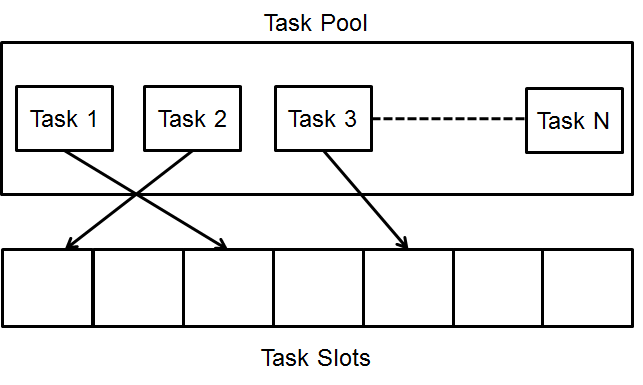}
    \end{center}
    \vspace{-0.2in}
\caption{Task scheduling depiction.}
\label{Fig:task_sched}
\end{figure}

Adapting~\cite{cit:peters}, the overall optimization problem can be posed as follows,
\begin{align}
\displaystyle &\max_{x_{ij}} \displaystyle\sum_{i}\sum_{j} r_{i}x_{ij},\nonumber \\
\text{subject to} &\displaystyle\sum_{j}x_{ij} \leq 1, \nonumber\\
                  & \displaystyle\sum_{i}\delta w_{i}x_{ij} \leq \beta,
\label{eq:human_opt}
\end{align}
where $\beta$ is the maximum task load threshold for the operator~\cite{yerkes1908relation}. We introduce uncertainty in the upper bounds of all the inequality constraints. This variability corresponds to flexibility in the requirement for completion of tasks -- in other words, we model the uncertainty on the requirement of the percentage of tasks that must be completed.

Note that in this problem formulation, one is maximizing the reward while ensuring that the task load does not exceed prescribed limits. In~\cite{cit:peters}, the authors deal with uncertainty in constraints by using a scenario based approach. This methodology involves appending the cost function with multiple samples of the constraints and adding them to the original problem formulation. This approach suffers from multiple drawbacks including the \emph{inability to deal with uncertainty in the cost function} and \emph{scalability issues due to the number of appended constraints}. In contrast, the polynomial chaos approach is able to deal with uncertainty in both cost and constraints without artificially increasing the number of constraints. However, as noted previously, the polynomial chaos approach does suffer from a separate curse of dimensionality (as shown in Eqn.~\ref{eq:curse_dim}) that constrains its application to problems with a large number of the uncertain parameters.

Let us assume that the the operator has three available tasks in this task pool with identical reward $r_{i} = 1.0$ and identical impact on operator task load $\delta w_{i} = 3.0$. We also include a ``rest'' task that reduces the overall accumulated task load by setting the associated $\delta w_{i}$ to  $-1.0$. We assume that the UAV operator has three such available ``rest'' tasks at his/her disposal. We compute the task schedule over $10$ slots. Let $\beta$ (the threshold of the task load and variability in the upper bounds of all inequality constraints) be a random variable drawn from a Gaussian distribution with mean $\mu_{0}(\beta) = 1.0$ and standard deviation $\mu_{1} = 0.2$.

We find that $1000$ Monte Carlo samples predict that the average behavior of the algorithm is to spread the six available tasks into each one of the slots, i.e. perform three tasks interspersed with three breaks. On average the linear program (using MATLAB's linprog function) is able to assign $99.48\%$ of the three tasks while using $91.47\%$ of available rest periods (spread out over $10$ slots). The variance in the completion of the tasks is $3.77\%$ with $3.05\%$ variance on the resting task. Polynomial chaos based stochastic optimization  exploits the orthogonality constraints of the polynomial expansions, as shown in Eqn.~\ref{eq:ortho}. The method predicts that on average $100\%$ of the three tasks will be completed by utilizing $92.4\%$ of the ``rest'' task. Moreover, it predicts that the variance of task completion is $4\%$ with $1\%$ variance on the resting task. As can be seen from the numerical values, polynomial chaos based method gets accuracy to the second decimal when predicting the mean and variance of the performance of the task scheduling linear program \textit{without resorting to expensive sampling based methods}.

\section{Conclusions}
Robust and stochastic optimization methods have found application in a wide variety of settings including control theory~\cite{cit:control}, system design~\cite{cit:design}, portfolio optimization~\cite{cit:portfolio}, and inventory optimization~\cite{cit:inventory} to name a few. Despite several existing algorithms, robust and stochastic optimization in non-convex settings remains an open and challenging area of critical importance.

In this work, we take early steps towards extending uncertainty quantification methods for optimization under parametric uncertainty. In particular, we use polynomial chaos based techniques for optimizing functions in the presence of uncertainty. We treat the optimization variable value as a random variable and expand it using orthogonal polynomials. Exploiting these orthonormality constraints allows one to construct a method with exponential convergence~\cite{Wiener}. Although, the approach is standard for uncertainty analysis in the presence of uncertainty, very little work has been done to exploit these methods for stochastic optimization. Our paper lays out a framework for using the polynomial chaos approach for optimizing uncertain cost functions in the presence of constraints which may also be uncertain. We include convergence proofs, derive error bounds, and study the preservation of structure (convexity and homogeneity). We then demonstrate the approach on a simple unconstrained one dimensional optimization problem, a two-dimensional non-convex problem, and a constrained optimization problem motivated from task allocation in human-machine systems. The approach is found to accurately capture the statistics (moments) of the optimizing values in an efficient manner \emph{without resorting to expensive sampling based computations}. This results in orders-of-magnitude reduction in the computational effort in finding statistics of the optimal solution in problems with low dimensional uncertainty. 

In future work, we intend to extend this approach to optimization of discontinuous functions using wavelet expansions~\cite{hybrid_uq} and construction of iterative optimization methods by extending the framework in~\cite{surana_uq}. The latter approach is expected to mitigate the curse of dimensionality associated with polynomial chaos expansions, thereby expanding its applicability.
\section{Acknowledgements}
This material is based on work supported by the US Air Force Research Lab (AFRL), Air Force Office of Scientific Research, under contract FA9550-14-C-0022 and Defense Advanced Research Projects Agency (DARPA) and Space and Naval Warfare Systems Center, Pacific (SSC Pacific) under contract number N6600118C4031.
The author thanks Dr. C. William Gear, Dr. Warren Powell, and Dr. Arvind Raghunathan for discussions and suggestions related to the work.
\bibliographystyle{unsrt}
\bibliography{Stochastic_Opt}
\end{document}